\numberwithin{equation}{section}
\newcommand{\beq}{\begin{equation}}
\newcommand{\eeq}{\end{equation}}
\newcommand{\beqs}{\begin{eqnarray*}}
\newcommand{\eeqs}{\end{eqnarray*}}
\newcommand{\beqn}{\begin{eqnarray}}
\newcommand{\eeqn}{\end{eqnarray}}
\newcommand{\beqa}{\begin{array}}
\newcommand{\eeqa}{\end{array}}
\def\p{\partial }
\def\R{\Bbb R}
\def\Om{\Omega}
\def\pom{\p  \Omega}
\def\bom{\overline  \Omega}
\newtheorem{Proposition}{Proposition}[section]
\newtheorem{Theorem}[Proposition]{Theorem}
\newtheorem{Lemma}[Proposition]{Lemma}
\title  {On A Class of Degenerate And Singular Monge-Amp\`ere   Equations }\thanks{This work was supported by NSFC 11771237}
\begin{document}

\address{Huaiyu Jian: Department of Mathematics, Tsinghua University, Beijing 100084, China.}

\address{You Li:Department of Mathematics, Tsinghua University, Beijing 100084, China.}

\address{Xushan Tu:Department of Mathematics, Tsinghua University, Beijing 100084, China.}

\email{hjian@math.tsinghua.edu.cn; you-li16@mails.tsinghua.edu.cn; txs17@mails.tsinghua.edu.cn. }

%\date{}

\bibliographystyle{plain}

%\tableofcontents
\maketitle

\baselineskip=15.8pt
\parskip=3pt

\centerline {\bf  Huaiyu Jian \ \ \ \ You Li \ \ \ \   Xushan Tu}
\centerline {Department of Mathematics, Tsinghua University}
\centerline {Beijing 100084, China}

\vskip20pt

\noindent {\bf Abstract}:
%\begin{abstract}
In this paper we shall prove  the existence, uniqueness  and global H$\ddot{o}$lder continuity for the Dirichlet problem of
a class of Monge-Amp\`ere type equations which may be degenerate and singular on the boundary of  convex domains.
 We  will establish  a relation of the H$\ddot{o}$lder exponent for the solutions with the convexity for the domains.
%\end{abstract}
 \vskip20pt
 \noindent{\bf Key Words:} existence, uniqueness, global regularity, degenerate, singular,  Monge-Amp\`ere equation
 \vskip20pt

\noindent {\bf AMS Mathematics Subject Classification}:  35J60, 35J96, 53A15.

\vskip20pt

\noindent {\bf  Running head}:
Degenerate And Singular  Monge-Amp\`ere  Equations
\vskip20pt

\baselineskip=15.8pt
\parskip=3pt

\newpage

\centerline {\bf On A Class of Degenerate And Singular Monge-Amp\`ere   Equations  }

 \vskip10pt

\centerline { Huaiyu Jian\ \ \ \ You Li\ \ \ \ Xushan Tu}

%\tableofcontents
\maketitle

\baselineskip=15.8pt
\parskip=3.0pt

\section {Introduction}
%sec 1

In this paper we study the Monge-Amp\`ere type equation
\begin{equation}
\begin{split}\label{1.1}
\det  D^2 u&=  F(x, u)\ \  \text{in}\ \Om,\\
   u &=0\ \ \text{on}\ \pom,
\end{split}
 \end{equation}
where $\Om$ is a bounded convex domain in $R^n$ $(n\geq 2)$, and $F$
satisfies the following (1.2)-(1.3):
\begin{equation}\label{1.2}
\begin{split} F(x, t)\in C (\Om\times(-\infty,0)) \text{ is
non-decreasing in}\ t  \text{ for any}\ x\in \Om;
\end{split}
 \end{equation}
\begin{equation}\label{1.3}
\begin{split}
&\text{there are constants }\  A>0,\ \alpha\geq0,\ \beta\geq n+1 \ \text{ such that} \\
&0<F(x,  t )\leq A d_{x}^{\beta-n-1}|t|^{-\alpha} \ \ \ \forall (x,t) \in \Om\times(-\infty,0),\\
\end{split}
 \end{equation}
where $d_{x}=dist(x, \pom)$.  Obviously, this problem  is singular and degenerate at the boundary of the domain.

 The particular case of  problem (1.1)  includes a few geometric problems.  When $F= |t|^{-(n+2)}$  and $u$ is a solution to problem (1.1),
 then the Legendre transform of $u$  is a complete affine hyperbolic sphere \cite {[Ca2], [CY], [CY2], [JL], [JLW]}, and
  $(-u)^{-1}\sum u_{x_ix_j} dx_idx_j$ gives the Hilbert metric (Poincare metric) in the convex domain $\Om$ \cite {[LN]}. When $F =f(x)|t|^{-p},$
   problem (1.1) may be obtained from $L_p$-Minkowski problem \cite{[Lut]} and the Minkowski problem in centro-affine geometry\cite {[CW], [JLZ]}.  Also see p.440-441 in
  \cite{[JW]}.   Generally,  problem (1.1) can be applied to construct non-homogeneous complete Einstein-K$\ddot{a}$hler metrics on a tubular domain \cite {[CY], [CY1]}.

 Cheng and Yau in \cite{[CY]} proved that
 if $\Om$  is a  strictly convex $C^2$-domain  and  $F\in C^{k}$ ($k\geq3$)   satisfies (1.2)-(1.3),
then  problem (1.1) admits an unique  convex generalized solution  $u\in C(\bar \Om)$. Moreover,
 $u\in C^{k+1, \varepsilon}(\Om)\bigcap C^{\gamma}(\bar \Om)$  for  any   $\varepsilon \in ( 0 ,  1)$ and some
 $\gamma=C(\beta, \alpha, A, n, \partial\Om)\in (0, 1)$. We should emphasize that their methods need the strict convexity and the smoothness of  $\Om$,  and the differentiability of $F$.

In this paper we find that the global H$\ddot{o}$lder regularity  for   problem (1.1) is independent of the smoothness of $\Om$ and $F$, and  the H$\ddot{o}$lder exponent depends  only on the convexity of the domain.    As a result, we can remove the smoothness of  $\Om$ as well as  the differentiability of $F$ in \cite{[CY]}. Moreover,   using the concept of $(a, \eta)$ type introduced in \cite{[JL]}  to describe the convexity of the domain, we obtain a relation of the H$\ddot{o}$lder exponent for $u$  with the convexity for $\Omega$.

We have noticed that there are many papers on    global regularity for   equations of  Monge-Amp\`ere type. See, for example, \cite{[CNS], [F], [GT],[LS], [Sa], [TW], [U1]} and the references therein. But, generally speaking,  those results require that the   domain $\Omega$ should be  strictly  convex and $\partial \Omega  \in C^{1,1} $.

Our first result is stated as the following
\begin{Theorem}\label{1.1}
Supposed that $\Om$  is a  bounded convex domain in $R^n$ and  $F(x, t)$ satisfies (1.2)-(1.3).  Let
\begin{equation}\label{1.4}
\gamma_1:=\left\{\begin{array}{cc}
\frac{\beta-n+1}{n+\alpha}, & {\rm if }   \beta<\alpha+2n-1,\\
\text{any number in} (0, 1), & {\rm if } \beta\geq \alpha+2n-1.
\end{array}
\right.
  \end{equation}
 Then problem (1.1) admits an unique convex generalized solution  $u\in C^{\gamma_1}(\overline{\Om})$.
Furthermore, $u\in C^{2,\gamma_1}(\Om)$   if $F(x, t)\in C^{0,1}(\Om\times(-\infty,0))$.

\end{Theorem}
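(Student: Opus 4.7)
The plan is an approximation argument with uniform a priori estimates. Approximate $\Omega$ from inside by smooth strictly convex subdomains $\Omega_k \nearrow \Omega$ and mollify $F$ to smooth $F_k$ still satisfying (1.2)--(1.3) with the same constants. By Cheng--Yau, each approximating Dirichlet problem admits a classical convex solution $u_k$, and a uniform $L^\infty$ bound follows by comparison with the explicit quadratic subsolution $\widetilde v(y) = M(|y - z_0|^2 - R^2)$ on a circumscribing ball $B_R(z_0) \supset \Omega$, where $M$ is chosen large enough that $(2M)^n \geq A(\mathrm{diam}\,\Omega)^{\beta - n - 1}|\widetilde v|^{-\alpha}$ on $\Omega$.

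The heart of the argument is a uniform boundary H\"older estimate $|u_k(y)| \leq C\,\mathrm{dist}(y,\partial\Omega_k)^{\gamma_1}$. Fix $x_0 \in \partial\Omega_k$, choose a supporting hyperplane at $x_0$ with inward unit normal $\nu$, and work in coordinates with $x_0 = 0$ and $\nu = e_n$. I would use the barrier
\[
V(y) \;=\; y_n^{\gamma_1}\bigl(C_2 |y|^2 - C_1\bigr) \qquad\text{on } \Omega_k \cap B_r(x_0),
\]
with $r^2 \leq \frac{1-\gamma_1}{2(1+\gamma_1)} \cdot \frac{C_1}{C_2}$. Then $V \leq 0$ on $\partial(\Omega_k \cap B_r(x_0))$, since $|y|^2 \leq r^2 < C_1/C_2$ on both pieces. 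A Schur-complement calculation on the block Hessian (diagonal tangential block $2C_2 y_n^{\gamma_1} I_{n-1}$, normal entry $V_{nn}$ of order $C_1\gamma_1(1-\gamma_1) y_n^{\gamma_1-2}$, and mixed entries $V_{in} = 2C_2 \gamma_1 y_n^{\gamma_1-1} y_i$) yields
\[
\det D^2 V \;\geq\; \tfrac{1}{2}\,C_1\gamma_1(1-\gamma_1)(2C_2)^{n-1}\, y_n^{\,n\gamma_1-2}
\]
for small $y_n$, while (1.3), the inequality $d_y \leq y_n$, and $|V| \geq c_0 C_1 y_n^{\gamma_1}$ give $F_k(y,V) \leq A(c_0 C_1)^{-\alpha} y_n^{\beta-n-1-\alpha\gamma_1}$. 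The matching condition $n\gamma_1 - 2 = \beta - n - 1 - \alpha\gamma_1$ is exactly $\gamma_1 = (\beta-n+1)/(n+\alpha)$, which is (1.4); taking $C_1, C_2$ large (depending only on $n, A, \alpha, \beta, \mathrm{diam}\,\Omega$) turns $V$ into a genuine subsolution, and the Monge--Amp\`ere comparison principle gives $u_k(y) \geq V(y) \geq -C_1|y - x_0|^{\gamma_1}$. Varying $x_0$ yields the uniform bound.

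Arzel\`a--Ascoli then extracts a subsequential limit $u \in C^{\gamma_1}(\overline\Omega)$, convex with $u = 0$ on $\partial\Omega$; stability of generalized Monge--Amp\`ere solutions identifies $u$ with the desired solution of (1.1). Uniqueness is immediate: on $\{u_1 < u_2\}$, (1.2) gives $\det D^2 u_1 \leq \det D^2 u_2$ and $u_1 = u_2$ on the boundary, forcing $u_1 \geq u_2$ by comparison, a contradiction. For the interior $C^{2,\gamma_1}$ statement under $F \in C^{0,1}$, on any $\Omega' \subset\subset \Omega$ the function $F(\cdot, u(\cdot))$ is H\"older of order $\gamma_1$ and bounded between positive constants, so Caffarelli's strict convexity theorem and his interior $C^{2,\alpha}$ estimates yield $u \in C^{2,\gamma_1}(\Omega)$. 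The main technical obstacle is verifying the subsolution inequality \emph{throughout} $\Omega_k \cap B_r(x_0)$ (not merely on the axis $y' = 0$) and uniformly in $k$: the Schur-complement absorption must control the off-diagonal contributions $V_{in}$ for all $|y'|^2 \leq r^2$, and the constants must not depend on the particular geometry of $\partial\Omega_k$ near $x_0$. It is precisely this uniformity that lets us dispense with the strict convexity and $C^2$ hypotheses on $\partial\Omega$ used in \cite{[CY]}; the secondary case $\beta \geq \alpha + 2n - 1$ of (1.4) is easier, since the formula then gives a nominal exponent $\geq 1$ and the exponent inequality is satisfied with slack for any $\gamma_1 \in (0,1)$.
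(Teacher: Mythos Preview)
Your strategy matches the paper's almost exactly: approximate $\Omega$ by smooth strictly convex $\Omega_k$, invoke Cheng--Yau, prove a uniform boundary H\"older estimate via a barrier of the form $y_n^{\gamma_1}\cdot(\text{bounded negative factor})$, pass to the limit, and finish with Caffarelli's interior $C^{2,\alpha}$ theory. The paper uses the barrier $W=-M x_n^{\gamma_1}\sqrt{N^2l^2-|x'|^2}$ (with $l=\mathrm{diam}\,\Omega$), which is a close cousin of your $V=y_n^{\gamma_1}(C_2|y|^2-C_1)$; the Schur-complement computation and the exponent matching $(n+\alpha)\gamma_1=\beta-n+1$ are identical in spirit.

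There is, however, a genuine gap in your comparison step. You localize to $\Omega_k\cap B_r(x_0)$ and assert that $V\le 0$ on its boundary suffices. It does not: on the inner piece $\Omega_k\cap\partial B_r$ you need $V\le u_k$, and since $u_k\le 0$ there, knowing $V\le 0$ gives nothing. Worse, on $\partial B_r\cap\Omega_k$ one has $V(y)=y_n^{\gamma_1}(C_2r^2-C_1)$, which tends to $0$ as $y_n\to 0$; for a merely convex $\Omega$ (flat boundary near $x_0$) such points exist in $\Omega_k\cap\partial B_r$, so no choice of $C_1,C_2$ forces $V\le -\|u_k\|_\infty$ there. The $L^\infty$ bound you derived does not close this.

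The fix is simple and is exactly what the paper does: make the barrier \emph{global}. Since $x_0\in\partial\Omega$, every $y\in\Omega_k$ satisfies $|y|\le l$, so choosing $C_1/C_2>\frac{2(1+\gamma_1)}{1-\gamma_1}\,l^2$ allows $r\ge l$, whence $B_r(x_0)\supset\Omega_k$ and the inner boundary piece disappears; comparison is then on all of $\Omega_k$ against $u_k|_{\partial\Omega_k}=0$. (The paper's choice $N>1$ in $\sqrt{N^2l^2-|x'|^2}$ plays the same role.) With this correction your argument goes through, and the remaining steps (stability of Aleksandrov solutions, uniqueness via monotonicity, Caffarelli's interior estimates) are as in the paper.
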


Here a generalized solution  means the well-known Alexandrov solution. See, for example,  \cite{[F], [G], [TW1]} for the details.

To  improve the regularity for the solution obtained in Theorem 1.1,  we  use the $(a, \eta)$ type in \cite{[JL]} to describe the convexity
of $\Om$. From now on, we denote
  $$x=(x_1, x_2, \dots , x_n)=(x', x_n), \ \
 x'=(x_{1},...,x_{n-1})$$ and
 $$  |x'|=\sqrt{x_{1}^{2}+...+x_{n-1}^{2}}.$$

 \noindent{\bf  Definition 1.1}. {\sl Supposed  that $\Om$ is a bounded convex domain in $R^n$, and $x_0\in \pom$.
 $x_{0}$ is called to be  $(a, \eta)$ type   if
there are numbers $a\in[1,+\infty)$ and $\eta>0$, after  translation and rotation transforms, we have
$$x_{0}=0  \ \   \text{and} \ \   \Om\subseteq\{x\in R^{n}|x_{n}\geq\eta|x'|^{a}\}.$$
$\Om$  is called   $(a, \eta)$ type domain if  every    point  of $\partial \Om$ is $(a, \eta)$ type.}

\noindent{\bf Remark 1.1}. The convexity requires that the number $a$ should be no less than 1. The  less is $a$, the more convex is the domain.
There is no $(a, \eta)$ type domain for $a\in [1,2)$, although part of
$\partial \Om$ may be $(a, \eta)$ type point for $a\in [1,2)$.

\noindent{\bf  Definition 1.2}. {\sl We say that a   domain
  $\Om$   in $R^n$ satisfies exterior (or interior) sphere condition with radius $R$ if   for  each $x_{0}\in \pom$,  there
 is a $B_R(y_0) \supseteq \Om $ (or $B_R(y_0) \subseteq \Om $, respectively) such that $\partial B_R(y_0) \bigcap  \pom\ni x_0$. }

In \cite{[JL]},   we have proved that $(2, \eta)$ type domain is equivalent to the domain satisfies exterior sphere condition.

 The following two theorems show the relation of the H$\ddot{o}$lder exponent for $u$  on $\bar \Om$ with the convexity for $\Omega$.

\begin{Theorem} \label {1.2} Supposed that $\Om$  is  (a, $\eta$) type domain  in $R^n$ with $a\in (2,+\infty)$, and  $F $  satisfies (1.2)-(1.3). Let Let
\begin{equation}\label{1.5}
\gamma_2:=\left\{\begin{array}{cc}
\frac{\beta-n+1}{n+\alpha}+\frac{2n-2}{a(n+\alpha)}, & {\rm if }   \beta<\alpha+2n-1-\frac{2n-2}{a},\\
\text{any number in} (0, 1), & {\rm if } \beta\geq \alpha+2n-1-\frac{2n-2}{a}.
\end{array}
\right.
  \end{equation}
 Then the convex generalized solution to problem (1.1)
\beq \label{1.6}   u\in C^{ \gamma_2 } (\overline{\Om}).\eeq
Furthermore $u\in C^{2, \gamma_2 }(\Om)$  if $F(x, t)\in C^{0,1}(\Om\times(-\infty,0))$.
\end{Theorem}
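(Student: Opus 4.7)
The existence and uniqueness of a convex Alexandrov solution $u$, and its interior regularity of exponent $\gamma_{1}$, are already furnished by Theorem 1.1, since every $(a,\eta)$-type domain is bounded and convex. Theorem 1.2 therefore amounts to sharpening the H\"older exponent from $\gamma_{1}$ to $\gamma_{2}$, by exploiting the stronger inclusion $\Om\subseteq\{x_{n}\geq\eta|x'|^{a}\}$ that is now available at each boundary point after a rigid motion.

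The scheme is the classical barrier/comparison method. Fix $x_{0}\in\pom$; by Definition 1.1 we may assume $x_{0}=0$ and $\Om\subseteq\{x_{n}\geq\eta|x'|^{a}\}$. It suffices to produce, uniformly in $x_{0}$, a convex subsolution $v$ on a small $\Om_{\rho}:=\Om\cap B_{\rho}(0)$ with $v(0)=0$, $v\leq u$ on $\p\Om_{\rho}$, $\det D^{2}v\geq F(x,v)$ in $\Om_{\rho}$, and the pointwise lower bound $v(x)\geq -C|x|^{\gamma_{2}}$. The monotonicity of $F$ in $t$ yields the required comparison principle: any convex $v$ with $\det D^{2}v\geq F(x,v)$ and $v\leq u$ on $\p\Om_{\rho}$ satisfies $v\leq u$ inside $\Om_{\rho}$. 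Consequently $|u(x)|\leq C|x|^{\gamma_{2}}$ uniformly in $x_{0}\in\pom$, and a standard interpolation with the local convex estimates from Theorem 1.1 promotes this to $u\in C^{\gamma_{2}}(\bom)$. Under the Lipschitz hypothesis on $F$, this sharper boundary regularity renders $F(\cdot,u(\cdot))$ locally $C^{\gamma_{2}}$ in $\Om$, and Caffarelli's interior $C^{2,\alpha}$ Schauder theory for Monge-Amp\`ere equations with H\"older right-hand side then upgrades $u$ to $C^{2,\gamma_{2}}(\Om)$.

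The technical heart, and main obstacle, is the construction of the subsolution $v$ adapted to $(a,\eta)$-type geometry. A natural candidate, extending the barriers from \cite{[JL]}, has the form
\[
v(x)\;=\;-M\,\bigl[x_{n}+\mu\,|x'|^{b}\bigr]^{\gamma_{2}},
\]
or a close variant, where $b$ is tied to $a$ and the constants $M,\mu$ are chosen so that $v$ is convex on $\Om_{\rho}$ and satisfies $v\leq 0$ on $\pom\cap B_{\rho}$ together with $v\leq u$ on $\p B_{\rho}\cap\Om$. The determinant inequality $\det D^{2}v\geq F(x,v)$ reduces, via $F(x,v)\leq A\,x_{n}^{\beta-n-1}|v|^{-\alpha}$ (using $d_{x}\leq x_{n}$) and the containment $x_{n}\geq\eta|x'|^{a}$, to a homogeneity balance between powers of $x_{n}$ and of $|x'|$. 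Matching the dominant scales enforces the algebraic identity
\[
(n+\alpha)\gamma_{2}\;=\;(\beta-n+1)+\frac{2(n-1)}{a},
\]
which is exactly the first case of the definition of $\gamma_{2}$; the regime $\beta\geq\alpha+2n-1-\tfrac{2n-2}{a}$ is then handled by taking $\gamma_{2}$ arbitrarily close to $1$ and absorbing the remaining positive power into constants. The delicate point is arranging $b$ and $\mu$ so that convexity of $v$ persists across the transition region $x_{n}\sim\eta|x'|^{a}$ and so that the mixed $x_{n}$--$x'$ second-derivative terms in $\det D^{2}v$ produce precisely the gain $\tfrac{2(n-1)}{a(n+\alpha)}$ of $\gamma_{2}$ over $\gamma_{1}$; this gain is invisible to the purely half-space barrier underlying Theorem 1.1 and is what distinguishes Theorem 1.2.
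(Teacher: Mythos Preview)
Your overall strategy is correct and matches the paper's: reduce to the boundary estimate $|u(y)|\leq C\,d_y^{\gamma_2}$ via comparison with a subsolution adapted to the $(a,\eta)$-type geometry, then invoke Lemma~2.1 for the global H\"older bound, and finally Caffarelli's interior theory for $C^{2,\gamma_2}(\Om)$. The algebraic identity $(n+\alpha)\gamma_2=(\beta-n+1)+\tfrac{2(n-1)}{a}$ you isolate is exactly the one the paper arrives at.

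The genuine gap is in the barrier itself. Your candidate $v(x)=-M\bigl[x_n+\mu|x'|^{b}\bigr]^{\gamma_2}$ cannot work. If $\mu>0$, then $v$ is decreasing in $r=|x'|$, so the tangential eigenvalues $v_r/r$ of $D^2v$ are negative and $v$ is not convex. If instead $\mu<0$ with $b$ tied to $a>2$ (as the containment $\Om\subseteq\{x_n\geq\eta r^{a}\}$ forces, to keep $x_n-|\mu|r^{b}>0$ on $\Om$), then on the $x_n$-axis one has $v_r/r\sim r^{b-2}\to 0$, so $\det D^2v$ vanishes there and the inequality $\det D^2v\geq F(x,v)>0$ fails. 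Any power of $r$ strictly greater than $2$ inside the bracket kills the tangential Hessian at $r=0$, while a quadratic in $r$ paired with a \emph{linear} $x_n$ is incompatible with the $(a,\eta)$ inclusion for $a>2$.

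The paper resolves this by placing the $(a,\eta)$-scaling on $x_n$ rather than on $|x'|$: its barrier is
\[
W(x)\;=\;-\Bigl[\bigl(\tfrac{x_n}{\varepsilon}\bigr)^{2/a}-|x'|^{2}\Bigr]^{1/b},
\qquad b=\frac{2(n+\alpha)}{a(\beta-n+1)+2n-2},
\]
so that $\tfrac{2}{ab}=\gamma_2$. The quadratic $-|x'|^{2}$ gives $W_r/r=\tfrac{2}{b}|W|^{1-b}>0$ uniformly, ensuring convexity and a nondegenerate tangential Hessian, while the factor $(x_n/\varepsilon)^{2/a}$ with $\varepsilon$ small makes $W\leq 0$ on all of $\bar\Om$; hence no localization to $\Om\cap B_\rho$ is needed, and you avoid having to match $v\leq u$ on the inner cap $\partial B_\rho\cap\Om$. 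The determinant computation then splits into two regimes, $b>1$ versus $b\leq 1$ (equivalently $a<\tfrac{2\alpha+2}{\beta-n+1}$ versus $a\geq\tfrac{2\alpha+2}{\beta-n+1}$), treated separately with different lower bounds for $W_{rr}W_{nn}-W_{rn}^{2}$; this dichotomy is where most of the actual work lies and is absent from your sketch. Once $W\leq u$ is established by comparison on $\Om$, restricting to the $x_n$-axis yields $|u(y)|\leq (d_y/\varepsilon)^{\gamma_2}$ directly.
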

\vspace{0.5cm}

\begin{Theorem} \label {1.3} Let $\Om$  be a bounded convex domain in $R^n$ and $u$ be a convex generalized solution to problem (1.1).

(i) Suppose that $\Om$   satisfies exterior  sphere condition and $F$ satisfies (1.2)-(1.3).  Let
\begin{equation}\label{1.7}
\gamma_3:=\left\{\begin{array}{cc}
\frac{\beta}{n+\alpha}, & {\rm if }   \beta<\alpha+n,\\
\text{any number in} (0, 1), & {\rm if} \alpha+n\leq \beta< \alpha+n+1,\\
1, & {\rm if} \beta\geq \alpha+n+1.
\end{array}
\right.
\end{equation}
Then
\beq \label{1.8}
u\in C^{ \gamma_3 } (\overline{\Om}).\eeq
Furthermore $u\in C^{2, \gamma_3}(\Om)$  if $ F(x, t)\in C^{0,1}(\Om\times(-\infty,0)).$

(ii) If $\Om$  satisfies  interior sphere condition with radius $R$ and $F$ satisfies  (1.2) and
\beq \label{1.9}
A d_{x}^{\beta-n-1}|t|^{-\alpha} \leq F(x,  t ) , \ \forall (x,t)\in \Om\times(-\infty,0)
\eeq
 for some constants  $A>0$,
   then
\beq \label {1.10}   |u(y)|\geq C (d_y)^{\gamma_4}, \ \ \forall y\in \Om \eeq
for some constant $C=C(\beta, \alpha, A, n,  R)>0$, where
   \beq\label{1.11}    \gamma_4:=\frac{\beta}{n+\alpha}\in(0, 1),
\eeq
\end{Theorem}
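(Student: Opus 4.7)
Both parts should follow the blueprint already used for Theorems~1.1 and 1.2: build an explicit radial barrier on the sphere supplied by the boundary hypothesis, and compare with the Alexandrov solution $u$ via the Monge-Amp\`ere maximum principle. The advantage over Theorem~1.2 is that the sphere condition (equivalent to $(2,\eta)$ type) provides a ball of \emph{uniform} radius $R$ at every boundary point, which sharpens the exponent extracted from the comparison, and in part~(i) allows us to reach the Lipschitz regime.

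For (i), fix $x_0\in\pom$, let $B_R(y_0)\supseteq\Om$ be the exterior ball with $x_0\in\partial B_R(y_0)$, and try the convex barrier
\[
w(x) = -\delta\bigl(R^2-|x-y_0|^2\bigr)^{s}, \qquad r=|x-y_0|,
\]
with $s=\gamma_3$ in the first case of (1.7), $s\in(0,1)$ arbitrarily close to $1$ in the second, and $s=1$ (so $w=\delta|x-y_0|^2-\delta R^2$ is a shifted quadratic) in the third. A direct radial Hessian computation gives
\[
\det D^2 w = (2\delta s)^n(R^2-r^2)^{sn-n-1}\bigl[R^2-r^2(2s-1)\bigr].
\]
Since $\Om\subseteq B_R(y_0)$ forces $d_x\leq R-r$, and $\beta\geq n+1$, (1.3) yields $Ad_x^{\beta-n-1}|w|^{-\alpha}\leq C\delta^{-\alpha}(R^2-r^2)^{\beta-n-1-s\alpha}$. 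The exponent condition $s(n+\alpha)\leq\beta$, which holds in each of the three cases, makes the exponent of $(R^2-r^2)$ on the barrier side no larger than the one on the right-hand side (with equality in case~1, and a bounded right-hand side in case~3). Enlarging $\delta$ then produces $\det D^2 w\geq Ad_x^{\beta-n-1}|w|^{-\alpha}\geq F(x,w)$. On the open set $E:=\{w>u\}\subset\Om$, monotonicity of $F$ in $t$ gives $\det D^2 w\geq F(x,w)\geq F(x,u)=\det D^2 u$, while $w=u$ on $\partial E$; the comparison principle for Alexandrov solutions then forces $E=\emptyset$, and $|u(x)|\leq |w(x)|\leq \delta(2R)^{s}|x-x_0|^{\gamma_3}$. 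Since $x_0\in\pom$ was arbitrary, choosing it to be the boundary point nearest $x$ gives (1.8). The interior $C^{2,\gamma_3}$ regularity when $F\in C^{0,1}$ is then Caffarelli's interior $C^{2,\alpha}$ estimate applied to $u$, as in Theorem~1.1.

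For (ii), fix $y\in\Om$ with $d_y<R$, let $x_0\in\pom$ realize $d_y$, and take the interior ball $B_R(y_0)\subseteq\Om$ with $x_0\in\partial B_R(y_0)$ aligned so that $y$ lies on the segment $[y_0,x_0]$ and $R-|y-y_0|=d_y$. Try the dual barrier
\[
W(x) = -\mu\bigl(R^2-|x-y_0|^2\bigr)^{\gamma_4}\quad\text{on }B_R(y_0),
\]
which is convex, vanishes on $\partial B_R(y_0)$, and hence dominates $u$ on $\partial B_R(y_0)$. Because $B_R(y_0)\subseteq\Om$, the \emph{reverse} inequality $d_x\geq R-r$ holds on the ball, and the identity $\gamma_4(n+\alpha)=\beta$ makes the exponents of $(R^2-r^2)$ balance \emph{exactly}; this time $\mu$ is chosen small to yield $\det D^2 W\leq A d_x^{\beta-n-1}|W|^{-\alpha}$. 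On $E':=\{u>W\}\subset B_R(y_0)$, $|u|<|W|$ combined with the lower bound (1.9) gives $\det D^2 u\geq A d_x^{\beta-n-1}|W|^{-\alpha}\geq\det D^2 W$, and the maximum principle forces $E'=\emptyset$. Evaluating at $y$ produces $|u(y)|\geq |W(y)|=\mu(R^2-|y-y_0|^2)^{\gamma_4}\geq \mu R^{\gamma_4}d_y^{\gamma_4}$, which is (1.10). For $y$ with $d_y\geq R$, strict convexity of $u$ together with $u|_{\pom}=0$ already gives a uniform positive lower bound on $|u(y)|$, which dominates $Cd_y^{\gamma_4}\leq C(\mathrm{diam}\,\Om)^{\gamma_4}$.

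The principal obstacle is the exponent bookkeeping: one must verify that $s=\gamma_3$ (respectively $\gamma_4$) is precisely the choice that makes the singularities of $\det D^2 w$ (respectively $\det D^2 W$) and $Ad_x^{\beta-n-1}|\cdot|^{-\alpha}$ cancel, handle the borderline and Lipschitz regimes of (1.7) separately, and check that the factor $R^2-r^2(2s-1)$ in the radial Hessian stays bounded away from zero for $s\in(0,1]$. Once the barriers are in place, the comparison argument on the bad sets $\{w>u\}$ and $\{u>W\}$, using only the monotonicity of $F$ and the maximum principle for Alexandrov solutions, is essentially routine.
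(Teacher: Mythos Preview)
Your proposal is correct and follows essentially the same route as the paper: for both (i) and (ii) the paper uses exactly the radial barrier $W(x)=-M(R^2-|x-y_0|^2)^b$ on the exterior/interior ball, computes $\det D^2W=(2Mb)^n(R^2-r^2)^{n(b-1)-1}[R^2-(2b-1)r^2]$, chooses $b=\gamma_3$ (with the separate treatment $b=1$ when $\beta\ge n+\alpha+1$) or $b=\gamma_4$ so that the powers of $R-r$ balance, adjusts $M$ large (resp.\ small) to get a sub- (resp.\ super-) solution, and concludes via the comparison principle together with Lemma~2.1 and Caffarelli's interior estimate. Your phrasing of the comparison via the ``bad sets'' $E=\{w>u\}$ and $E'=\{u>W\}$ is just an explicit unpacking of the same maximum principle the paper invokes, and your handling of the three $\gamma_3$-regimes and of the far-from-boundary points in (ii) matches the paper's.
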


\noindent{\bf Remark 1.2}.  The H$\ddot{o}$lder regularity result of Theorem 1.1 can be viewed as the limit case of Theorem 1.2 as $a\to \infty$. Theorem 1.3 (i) shows that Theorem 1.2 is true for $a=2$, since a $(2, \eta)$ type domain
is equivalent to that the domain satisfies exterior sphere condition.

In the following Sections 2, 3, and 4,  we will  prove Theorems 1.1, 1.2, and 1.3, respectively.

\section {Proof of Theorem 1.1}
%sec 2

We start at a primary result which is useful to proving that a convex function in $\Om$ is H$\ddot{o}$lder continuous in
$\bar \Om$.

\begin{Lemma}\label{2.1}
Let $\Om$ be a bounded convex domain and $u\in C(\overline{\Om})$ be a  convex  function in $\Om$ with $u|_{\pom}=0$. If there are $\gamma\in(0,1]$ and  $M>0$
such that
\beq \label {2.1} |u(x)|\leq M{d_{x}}^{\gamma},\ \ \forall x\in \Om ,\eeq
  then $u\in C^{\gamma}(\overline{\Om})$ and
$$|u|_{C^{\gamma}}(\overline{\Om})\leq  M\{1+[diam(\Om)]^{\gamma}\} .$$
 \end{Lemma}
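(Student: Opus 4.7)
The main idea is to convert the interior decay hypothesis $|u(x)| \leq M d_x^\gamma$ into a pointwise Hölder estimate on $\overline{\Omega}$ by exploiting the one-dimensional convexity of $u$ along chords that extend out to $\partial\Omega$. The goal is to establish
\begin{equation*}
|u(x) - u(y)| \leq M |x - y|^\gamma \qquad \text{for all } x, y \in \overline{\Omega};
\end{equation*}
combined with the $L^\infty$ bound $\|u\|_\infty \leq M[\mathrm{diam}(\Omega)]^\gamma$, which is immediate from the hypothesis since $d_x \leq \mathrm{diam}(\Omega)$, this will deliver the claimed control on $|u|_{C^\gamma(\overline{\Omega})}$.

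Here is the chord argument. Fix distinct $x, y \in \overline{\Omega}$, and assume by symmetry that $u(x) \leq u(y) \leq 0$ (so $|u(x)| \geq |u(y)|$; note $u \leq 0$ on $\overline{\Omega}$ by convexity plus the boundary condition). Since $\Omega$ is bounded and convex, the ray starting at $x$ and passing through $y$ exits $\overline{\Omega}$ at a unique point $z \in \partial\Omega$ with $y \in [x, z]$ (if $y \in \partial\Omega$, take $z = y$). Writing $y = (1-t)x + tz$ with $t = |x - y|/|x - z| \in [0, 1]$, the convexity of $u$ together with $u(z) = 0$ gives
\begin{equation*}
u(y) - u(x) \leq (1-t)u(x) + t u(z) - u(x) = -t u(x) = t |u(x)|.
\end{equation*}
The normalization makes the left side nonnegative and equal to $|u(x) - u(y)|$, so the hypothesis yields
\begin{equation*}
|u(x) - u(y)| \leq M d_x^\gamma \cdot \frac{|x - y|}{|x - z|}.
\end{equation*}

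The final step is the algebraic inequality $d_x^\gamma |x - y|/|x - z| \leq |x - y|^\gamma$. This follows uniformly, with no case split, by using $d_x \leq |x - z|$ (since $z \in \partial\Omega$) and $|x - y| \leq |x - z|$ (since $y \in [x, z]$) together with $0 < \gamma \leq 1$: write $|x - y| = |x - y|^\gamma |x - y|^{1-\gamma}$ and bound $d_x^\gamma \leq |x - z|^\gamma$ and $|x - y|^{1-\gamma} \leq |x - z|^{1-\gamma}$, so that the ratio collapses to $|x - y|^\gamma$. This closes the pointwise Hölder estimate with constant $M$, and combined with the $L^\infty$ bound gives $|u|_{C^\gamma(\overline{\Omega})} \leq M\{1 + [\mathrm{diam}(\Omega)]^\gamma\}$. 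There is no genuine obstacle here; the only mildly delicate point is recognizing that the final inequality holds uniformly once the ratio is split in the telescoping form above, so that no auxiliary case analysis on $|x - y|$ versus $d_x$ is required.
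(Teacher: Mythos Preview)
Your proof is correct and shares the paper's overall strategy: restrict $u$ to the chord through the two points and exploit one-dimensional convexity together with the fact that the chord meets $\partial\Omega$. The implementations diverge at the one-dimensional step. The paper extends the segment to \emph{both} boundary endpoints $y_1,y_2$ and uses the monotonicity of the increment $s\mapsto u(s+h)-u(s)$ for a one-variable convex function to replace $|u(x_1)-u(x_2)|$ by the increment over an interval of the same length $h=|x_1-x_2|$ abutting $\partial\Omega$; the hypothesis (2.1) is then applied at a point whose boundary distance is at most $|x_1-x_2|$, which yields $M|x_1-x_2|^\gamma$ directly with no further algebra. You instead extend only to one side, apply the secant inequality $u(y)\le(1-t)u(x)+tu(z)$, and close with the algebraic observation $d_x^\gamma\,|x-y|/|x-z|\le|x-y|^\gamma$ via $d_x\le|x-z|$ and $|x-y|\le|x-z|$. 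Both routes deliver the same H\"older seminorm bound $M$; yours is slightly more self-contained (it uses only the definition of convexity rather than the derived monotonicity-of-increments fact), while the paper's avoids the final ratio manipulation by arranging to invoke the hypothesis at a more convenient point.
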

\begin{proof} This was proved in \cite{[JL]}. Here we copy the arguments for the convenience.

For any two point $x_{1}$, $x_{2}\in \Om$, consider the line determined by $x_{1}$ and $x_{2}$.
The line will intersect $\pom$ at two points $y_{1}$ and $y_{2}$.
Without loss generality we assume the four points are   $y_{1}$, $x_{1}$, $x_{2}$, $y_{2}$  in order.
By restricted onto the line,  $u$ is one dimension convex function. By the monotonic proposition of convex functions, we have
$$|u(x_{2})-u(x_{1})|\leq \max \{|u(y_{1}+(x_{2}-x_{1}))-u(y_{1})|, \ |u(y_{2})-u(y_{2}-(x_{2}-x_{1}))|\}.$$
 Moreover, since $y_{1}\in\pom$, by the assumption (2.1) we have
\begin{equation*}
\begin{split}
|u(y_{1}+(x_{2}-x_{1}))-u(y_{1})|
=&|u(y_{1}+(x_{2}-x_{1}))|\\
\leq& M \{dist(y_{1}+x_{2}-x_{1}, \pom)\}^{\gamma}\\
\leq& M |x_{2}-x_{1}|^{\gamma}.\\
\end{split}
\end{equation*}
Similarly,
$$|u(y_{2})-u(y_{2}-(x_{2}-x_{1}))|\leq M |x_{2}-x_{1}|^{\gamma}.$$
The above three inequalities, together with (2.1), implies the desired result.
\end{proof}

To prove Theorem 1.1, we need an
  a priori estimate result as follows, which holds  without strictly convexity of $\Om$ or any smoothness  of $\Om$ and  of $F$.

\begin{Lemma} \label{2.2}
 Supposed that $\Om$  is a  bounded convex domain in $\R^n$ and  $F(x, t)$ satisfies (1.2) and (1.3). If
   $u$ is a   convex  generalized  solution to problem (1.1),
then   $u\in C^{\gamma_1}(\overline{\Om})$ and
\beq \label {2.2} |u|_{C^{\gamma_1}(\overline{\Om})}
\leq C(\alpha,\ \beta,\ A,\ diam(\Om),\ n),\eeq
\end{Lemma}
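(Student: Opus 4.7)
By Lemma~2.1 it suffices to prove there is a constant $M = M(\alpha,\beta,A,D,n)$, with $D := \mathrm{diam}(\Omega)$, such that $|u(x_0)| \le M\, d_{x_0}^{\gamma_1}$ for every $x_0 \in \Omega$, because Lemma~2.1 then immediately furnishes the H\"older estimate~(2.2). I plan to obtain this pointwise bound by constructing, for each $x_0$, an explicit convex subsolution $v$ of problem~(1.1) that satisfies $v \le 0$ on $\partial\Omega$ and $v(x_0) = -M\, d_{x_0}^{\gamma_1}$; the monotonicity~(1.2) then permits the Monge-Amp\`ere comparison principle to conclude $v \le u$ on $\Omega$, hence $|u(x_0)| \le M\, d_{x_0}^{\gamma_1}$.

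Fix $x_0 \in \Omega$ and let $y_0 \in \partial\Omega$ realize $d_{x_0} = |x_0 - y_0|$. Convexity of $\Omega$ provides a supporting hyperplane at $y_0$; after a rigid motion I may assume $y_0 = 0$, $x_0 = d_{x_0} e_n$, and $\Omega \subseteq \{x_n \ge 0\} \cap \overline{B_D(0)}$. The barrier I shall use is
\[
v(x) \;=\; \bigl(c_1 |x'|^2 - c\bigr)\, x_n^{\gamma_1},
\]
with parameters $c, c_1 > 0$ to be chosen, so that $v(x_0) = -c\, d_{x_0}^{\gamma_1}$. A direct Hessian calculation (block $x'/x_n$ structure, Schur complement) gives
\[
\det D^2 v \;=\; (2c_1)^{n-1}\gamma_1 \bigl[c(1-\gamma_1) - c_1(1+\gamma_1)|x'|^2\bigr]\, x_n^{n\gamma_1 - 2}.
\]
The essence of the construction is that the defining identity $\gamma_1(n+\alpha) = \beta - n + 1$ forces the exponent $n\gamma_1 - 2$ on $x_n$ here to coincide with the exponent $\beta - n - 1 - \alpha\gamma_1$ of $x_n$ appearing in $A\, x_n^{\beta-n-1}|v|^{-\alpha}$, via $|v| = (c - c_1|x'|^2) x_n^{\gamma_1}$.

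Together with the bound $d_x \le x_n$ (valid for $x \in \Omega$ because $\Omega \subset \{x_n\ge 0\}$ and the vertical segment from $x$ meets $\partial\Omega$ at distance $\le x_n$) and $d_x^{\beta-n-1} \le x_n^{\beta-n-1}$ (from $\beta \ge n+1$), the subsolution inequality $\det D^2 v \ge F(x,v)$ then reduces to an $x_n$-independent inequality in $|x'|^2 \in [0, D^2]$ that is monotone: LHS linearly decreasing, RHS increasing in $|x'|^2$. The worst case is $|x'| = D$. Choosing $c = 2c_1 D^2(1+\gamma_1)/(1-\gamma_1)$ achieves simultaneously (i) $v \le 0$ on $\partial\Omega$, since $|x'| \le D$ forces $c_1|x'|^2 < c$, and (ii) strict positivity of the bracket factor in $\det D^2 v$ on $\Omega$, so that $v$ is convex there. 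The remaining worst-case subsolution condition takes the form $c_1^{n+\alpha} \ge C(\alpha,\beta,A,D,n)$ and is satisfied by taking $c_1$ as the corresponding $(n+\alpha)$-th root. Comparison then produces $v \le u$ on $\Omega$, so $|u(x_0)| \le c\, d_{x_0}^{\gamma_1}$ with $M := c$ of the required form.

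The main delicacy I anticipate is the boundary condition $v \le 0$ on $\partial\Omega$: because $\Omega$ is only convex (no strict convexity or smoothness at $y_0$), the only geometric data available are the supporting hyperplane $\{x_n = 0\}$ and the global inclusion $\Omega \subset B_D(0)$, and this is precisely what dictates the coupling $c \sim c_1 D^2$ above. A simpler $|x'|^2$-free ansatz such as $v = -c\, x_n^{\gamma_1}$ would satisfy the boundary condition but have $\det D^2 v \equiv 0$ and so not be a subsolution. When $\beta \ge \alpha + 2n - 1$ (so that the formula would give $\gamma_1 \ge 1$) one instead picks an arbitrary $\gamma_1 \in (0, 1)$; the exponents of $x_n$ in $\det D^2 v$ and $F(x,v)$ then no longer match, but since $0 < x_n \le D$ the residual $x_n$-factor is uniformly bounded and can be absorbed into a larger $c_1$.
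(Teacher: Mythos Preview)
Your proposal is correct and follows essentially the same route as the paper: reduce to a pointwise bound via Lemma~2.1, normalize coordinates using a supporting hyperplane at a nearest boundary point, build a product-form barrier $-(\text{function of }|x'|)\cdot x_n^{\gamma_1}$, exploit the identity $(n+\alpha)\gamma_1=\beta-n+1$ to cancel the $x_n$-powers, and apply the comparison principle. The only difference is cosmetic: the paper takes the transverse factor to be the spherical cap $-M\sqrt{N^2l^2-|x'|^2}$, whereas you use the paraboloid $c_1|x'|^2-c$; your choice makes the Hessian computation slightly cleaner and leads to the same conclusion.
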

where $\gamma_1$ is given by (1.4).

\begin{proof} First, we may assume
\beq \label{2.3} \beta < \alpha+2n-1.\eeq
Since for the case $\beta\geq  \alpha+2n-1$, we take a $\hat \beta<\alpha+2n-1$ such that
$\frac{\hat \beta-n+1}{n+\alpha}$ can be any number in $(0, 1)$. (Note $n \geq 2$).  Obviously,
(1.3) still holds with $\beta$ replaced by $\hat \beta$. Hence, this case is reduced to the case (2.3).

 Next, we assume for the time being that
 \beq \label{2.4} 0\in \overline{\Om}\subseteq R_{+}^{n}.\eeq
 Then we are going to  construct a sub-solution to problem (1.1).

For brevity,  write $l=diam(\Om)$.   Set
$$W=-Mx_{n}^{\gamma}\cdot\sqrt{N^{2}l^{2}-r^{2}}$$
 where $r=\sqrt{x_{1}^{2}+...+x_{n-1}^{2}}$. We will choose positive constants    $\gamma$, $M$, $N$  such that  $W$ is an sub-solution  to problem (1.1)
 under the assumptions (2.3) and  (2.4).

For $i, j\in\{1, 2, ..., n-1\}$, write $W_{i}=\frac{\partial W}{\partial x_i}, W_{ij}=\frac{\partial^2 W}{\partial x_i\partial x_j}$. Then we have
 \begin{equation*}
\begin{split}
W_{i}&=Mx_{n}^{\gamma}\cdot \frac{x_{i}}{\sqrt{N^{2}l^{2}-r^{2}}},\\
W_{ij}&=Mx_{n}^{\gamma}\cdot \frac{1}{\sqrt{N^{2}l^{2}-r^{2}}}(\delta_{ij}+\frac{x_{i}x_{j}}{N^{2}l^{2}-r^{2}}),\\
W_{n}&=-M\gamma x_{n}^{\gamma-1}\cdot\sqrt{N^{2}l^{2}-r^{2}},\\
W_{in}&=M\gamma x_{n}^{\gamma-1}\cdot \frac{x_{i}}{\sqrt{N^{2}l^{2}-r^{2}}},\\
W_{nn}&=M\gamma(1-\gamma)x_{n}^{\gamma-2}\cdot\sqrt{N^{2}l^{2}-r^{2}}.
\end{split}
\end{equation*}
Denote
 $$D^{2}W:=\begin{pmatrix} G & \xi \\ \xi^{T} &  W_{nn} \end{pmatrix}$$
  where $\xi^{T}=(W_{n1}, ..., W_{n (n-1)})$, and $G$ is the $(n-1)$-order matrix. Then
$$det D^{2}W=det G\cdot(W_{nn}-\xi^{T}G^{-1}\xi).$$
  Since all the eigenvalues of $G$ are
$$Mx_{n}^{\gamma}\frac{1}{\sqrt{N^{2}l^{2}-r^{2}}},..., \ \ Mx_{n}^{\gamma}\frac{1}{\sqrt{N^{2}l^{2}-r^{2}}},\ \  Mx_{n}^{\gamma}\frac{N^{2}l^{2}}{(N^{2}l^{2}-r^{2})\sqrt{N^{2}l^{2}-r^{2}}},$$
$$det G=M^{n-1}N^{2}l^{2}x_{n}^{(n-1)\gamma}\cdot(\frac{1}{\sqrt{N^{2}l^{2}-r^{2}}})^{n+1}.$$
It is direct to verify that
$$ G\xi= \frac {N^{2}l^{2}Mx_{n}^{\gamma}}{(N^{2}l^{2}-r^{2})^{\frac{3}{2}}}\xi.$$
It follows that
\begin{equation*}
\begin{split}
\xi^{T}G^{-1}\xi &= \frac{(N^{2}l^{2}-r^{2})^{\frac{3}{2}}}{N^{2}l^{2}Mx_{n}^{\gamma}}|\xi|^2\\
 &=\frac{M\gamma^{2}}{N^{2}l^{2}} x_{n}^{\gamma-2}  r^{2} \sqrt{N^{2}l^{2}-r^{2}} .
\end{split}
\end{equation*}
Hence, we obtain that
  \begin{equation} \label {2.5}
\begin{split}
det D^{2}W &=det G (W_{nn}-\xi^{T}G^{-1}\xi) \\
 &=M^{n-1}N^{2}l^{2}x_{n}^{(n-1)\gamma} (\frac{1}{\sqrt{N^{2}l^{2}-r^{2}}})^{n+1}  M\gamma x_{n}^{\gamma-2}\sqrt{N^{2}l^{2}-r^{2}} \\
 & \ \ \cdot [1-(1+\frac{r^{2}}{N^{2}l^{2}})\gamma]\\
 &=M^{n}N^{2}l^{2}\gamma x_{n}^{n\gamma-2} (\frac{1}{\sqrt{N^{2}l^{2}-r^{2}}})^{n}  [1-(1+\frac{r^{2}}{N^{2}l^{2}})\gamma].
\end{split}
\end{equation}

We want to prove
\beq \label{2.6} detD^{2}W\geq F(x, W) \ \ \text{in }\ \ \Om.\eeq

   Since  (1.3) and (2.4) implies that
  $$F(x,  W)\leq A d_{x}^{\beta-n-1}|W|^{-\alpha}\leq A x_{n}^{\beta-n-1}|W|^{-\alpha},$$
 we see that (2.6) can be deduced from
   \beq \label{2.7}
   det D^{2} W\geq A x_{n}^{\beta-n-1}|W|^{-\alpha} \ \ \text{in }\ \ \Om ,\eeq
 which is equivalent to
 \beq \label {2.8}
  det D^{2} W\cdot \frac{1}{A} x_{n}^{n+1-\beta}|W|^{\alpha}\geq1 \ \ \text{in }\ \ \Om.\eeq
 By (2.5), (2.8) is nothing but
 \beq\label{2.9}
\frac{1}{A}M^{n+\alpha}N^{2}l^{2}\gamma x_{n}^{(n+\alpha)\gamma-(\beta- n+1)}[1-(1+\frac{r^{2}}{N^{2}l^{2}})\gamma]\cdot(\sqrt{N^{2}l^{2}-r^{2}}\ )^{\alpha-n}\geq 1
\ \ \text{in }\ \ \Om.\eeq

Now we  choose $\gamma=\frac{\beta- n+1}{n+\alpha}$ such that
$$(n+\alpha)\gamma-(\beta- n+1)=0.$$
 Since  $\gamma \in(0, 1)$ by (2.3) and
 $r=|x'|\leq diam(\Om)=l$ in $\Om$,  we first take $N=C(\gamma)$ large enough such that
 $$1-(1+\frac{r^{2}}{N^{2}l^{2}})\gamma>0.$$
Noting $N^{2}l^{2}-r^{2}\in [(N^{2}-1)l^{2},\  N^{2}l^{2}]$, \ we
then   take $M=C(A, \alpha, \gamma, N, n, l)$ large enough such that
$$\frac{1}{A}M^{n+\alpha}N^{2}l^{2}\gamma x_{n}^{(n+\alpha)\gamma-(\beta- n+1)}[1-(1+\frac{r^{2}}{N^{2}l^{2}})\gamma]\cdot(\sqrt{N^{2}l^{2}-r^{2}}\ )^{\alpha-n}\geq 1,$$
we obtain (2.9) and thus have proved (2.6).

Finally, for any point $y\in \Om$,  letting $z\in \pom$ be the nearest boundary point to $y$,
by some translations and rotations, we assume $z=0$, $\Om\subseteq R_{+}^{n}$ and the line $yz$  is the $x_{n}-axis$.
This is to say that (2.4) is satisfied. Therefore we have (2.6). Obviously,   $W\leq 0$ on $\bom$.
Hence, $W$ is a sub-solution to problem (1.1).
By comparison principle for generalized solutions (see \cite{[F],[G],[TW1]} for example), we have
 $$|u(y)|\leq|W(y)|\leq MNl y_{n}^{\frac{\beta- n+1}{n+\alpha}}=MNld_{y}^{\frac{\beta- n+1}{n+\alpha}},$$
which, together with Lemma 2.1,  implies the desired result (2.2).

Note that we have used the fact that problem (1.1) is invariant under translation and rotation transforms, since $det D^2u$ is invariant and $F(x,u)$ is transformed to the one
satisfying the same condition as $F$.  This fact will be again used  a few times in the following.
\end{proof}

\vskip 0.5cm

{\bf Proof of  Theorem 1.1.}\  We prove the theorem by three steps.

 {\bf Step 1.}\  Suppose that $\Om$ is bounded convex but  $F(x, t)\in C^{k}(\Om\times(-\infty,0))$ $(k\geq3)$ satisfies (1.2) and (1.3).

We choose a sequence of bounded and strictly convex domains $\{\Om_{i}\}$ such that
\beq \label {2.10} \Om_{i}\in C^{2}\ \ \text{and} \ \ \Om_{i}\subseteq\Om_{i+1}, i=1, 2, \cdots, \ \   \bigcup_{i=1}^{\infty}\Om_{i}=\Om. \eeq
Then by Theorem 5 in \cite{[CY]}, there exists a convex generalized solution $u_{i}$ to problem (1.1) in the domain $\Om_{i}$ for each $i$.
 We assume $u_{i}(x)=0$ for  all $x\in R^{n}\setminus\Om_{i}$.
By Lemma 2.2, We have the uniform estimations
\beq \label{2.11}|u_{i}|_{C^{\frac{\beta-n+1}{n+\alpha}}(\overline{\Om})}=|u_{i}|_{C^{\frac{\beta-n+1}{n+\alpha}}(\overline{\Om_{i}})}\leq C(\alpha,\ \beta,\ A,\ diam(\Om),\ n),
\eeq
which implies that there is a subsequence, still denoted by itself, convergent to a $u$ in the space $C(\overline{\Om})$. Moreover, by (2.11) again, we have
 $$|u|_{C^{\frac{\beta-n+1}{n+\alpha}}(\overline{\Om})}\leq C(\alpha,\ \beta,\ A,\ diam(\Om),\ n).$$
 By the well-known convergence result for  convex generalized solutions (see Lemma 1.6.1 in \cite{[G]} for example), we see that $u$ is a convex generalized solution to problem (1.1).

{\bf Step 2.}\  Drop the restriction on the smoothness for $F$.

Suppose $F_{j}\in C^{k}(\Om\times(-\infty,0))$  $(k\geq3)$ satisfy the same assumption as $F$ in the Step 1
and $F_{j}$ locally uniform convergence to $F$ in as $j\to \infty$. (For example we can take $F_{j}=F*\eta_{\varepsilon_{j}}$, $\varepsilon_{j}$ convergence to 0 as $j$ tend to $+\infty$.)
Then by the result of Step 1, for each $j$,  there exists a   convex generalized solution $u_{j}\in C^{\frac{\beta-n+1}{n+\alpha}}(\overline{\Om})$ to problem (1.1) with $F$  replaced by  $F_{j}$. Moreover, we
have
\beq \label {2.12} |u_{j}|_{C^{\frac{\beta-n+1}{n+\alpha}}(\overline{\Om})}\leq C(\alpha,\ \beta,\ A,\ diam(\Om),\ n)\eeq for all $j$. Using this estimate, Lemma 1.6.1 in \cite{[G]}, and the same argument as in Step 1,
we obtain a solution $u$ to problem (1.1), which is the limit of a subsequence of $u_j$ in the space space $C(\overline{\Om})$. Furthermore, we have
 $u\in C^{\frac{\beta-n+1}{n+\alpha}}(\overline{\Om})$ by (2.12).   The uniqueness for (1.1) is directly from the comparison principle (see \cite{[F],[G],[TW1]} for example).

{\bf Step 3.} \  We are going to prove $u\in C^{2,\ \frac{\beta-n+1}{n+\alpha}}(\Om)$  if  $F(x, t)\in C^{0,1}(\Om\times(-\infty,0))$.

It is enough to prove
 \beq \label{2.13} u\in C^{2,\ \frac{\beta-n+1}{n+\alpha}}(\overline{\Om_1})\eeq for any convex $\Om_1\subset\subset\Om$.

 Taking   a convex  $\Om'$ such that $\Om_1\subset\subset\Om'\subset\subset\Om$, if there  exists $z\in\overline{\Om'}\subset\Om$ such that $u(z)=0$,   then  $u\equiv0$ in $\Om$  by convexity and the boundary condition $u|_{\pom}=0$. Hence
 we obtain (2.13).  Otherwise,  $u(x)<0$ \ for all $x\in\overline{\Om'}$. Then
  $F(x, u(x))\in C^{\frac{\beta-n+1}{n+\alpha}}(\overline{\Om'})$ and is positive on  $\overline{\Om'}$. By  the Caffarelli's local $C^{2,\alpha}$ regularity in \cite{[Caf]} (also see \cite{[JW1]} for another proof),
 we obtain (2.13), too.

\vskip20pt

\section {Proof of Theorem 1.2}
%sec 3
 In this section we  establish the relation between the H$\ddot{o}$lder exponent and the convexity of the domain $\Om$ and thus prove Theorem 1.2.

 Assume that $\Om$ is a   $(a, \eta)$ type domain  with $a\in (2, \infty )$, $F$ satisfies (1.2)-(1.3),  and $u$ is the unique solution to problem (1.1) as in Theorem 1.1.
 To prove Theorem 1.2, it is sufficient to prove (1.6). See the Step 3 in the proof of Theorem 1.1.

As (2.3) we may assume
\beq \label {3.1} \beta <\alpha+2n-1-\frac{2n-2}{a}.\eeq
Hence, in the following we have
$$\gamma_2=\frac{\beta-n+1}{n+\alpha}+\frac{2n-2}{a(n+\alpha)}\in (0, 1).$$

 By Lemma 2.1,  (1.6) can be deduced from
\beq \label{3.2}
|u(y)|\leq C \ {d_{y}}^{ \gamma_2}, \ \ \forall y\in \Om
\eeq
for some  positive constant$C=C(a,n, \alpha, \eta, A, diam\Om)$.

 We are going to  prove (3.2).   For any $y\in \Om$, we can find $z\in \pom$, such that $|y-z|=d_{y}.$
 Since the domain $\Om$ is $(a,\eta)$ type and the problem (1.1) is invariant   under translation and rotation transforms,
 we may assume $z=0$, and take the line determined by $z$ and $y$
  as the $x_{n}-axis$ such that
  $$\Om\subseteq\{x\in R^{n}|x_{n}\geq\eta|x'|^{a}\}.$$
We will prove (3.2) by three steps.

{\bf Step 1.}  Let
$$W(x_{1}, ..., x_{n})=W(r, x_n)=-[(\frac{x_{n}}{\varepsilon})^{\frac{2}{a}}-x_{1}^{2}-...-x_{n-1}^{2}]^{\frac{1}{b}},$$
 where  $r=|x'|=\sqrt{x_1^2+\cdots, x_{n-1}^2}$, $b$ and $\varepsilon$ are positive constants to be determined.
 We want to find a   sufficient condition for which $W$ is a sub-solution to  problem (1.1).

For $i, j\in\{1, 2, ..., n-1\}$, by direct computation we have
\beq \label {3.3}
\begin{split}
W_{i}=&W_{r}\frac{x_{i}}{r},\\
W{ij}=&\frac{W_{r}}{r}\delta_{ij}+(W_{rr}-\frac{W_{r}}{r})\frac{x_{i}}{r}\frac{x_{j}}{r},\\
W_{in}=&W_{rn}\frac{x_{i}}{r}.\\
\end{split}
\eeq
Let $$D^{2}W:=\begin{pmatrix} G & \xi \\ \xi^{T} &  W_{nn} \end{pmatrix}$$
where $\xi^{T}=(W_{n1}, ..., W_{n (n-1)})$, and $G$ is the matrix of $n-1$ order all of which eigenvalues
are $$\frac{W_{r}}{r}, ..., \frac{W_{r}}{r}, W_{rr},$$ and one of which
  eigenvector  with respect to the eigenvalue $W_{rr}$
is $\xi$.  As obtaining (2.5), we have
  $$det D^{2}W=(\frac{W_{r}}{r})^{n-2}W_{rr}(W_{nn}-\frac{|W_{rn}|^{2}}{W_{rr}}).$$
Obviously, $W\leq 0$ on $\pom$.
 Therefore we conclude that {\sl $W$  is a sub-solution to  problem (1.1)
 if and only if
\beq \label{3.4} H[W]:=(\frac{W_{r}}{r})^{n-2}(W_{rr}W_{nn}-|W_{rn}|^{2})[F(x,W)]^{-1}\geq 1\ \ \text{in} \ \ \Om .\eeq}

We use the expression of $W$ to compute
\begin{equation*}
\begin{split}
W_{r}&=\frac{2}{b}((\frac{x_{n}}{\varepsilon})^{\frac{2}{a}}-r^{2})^{\frac{1}{b}-1}\cdot r,\\
W_{n}&=-\frac{2}{ab}((\frac{x_{n}}{\varepsilon})^{\frac{2}{a}}-r^{2})^{\frac{1}{b}-1}\cdot
(\frac{x_{n}}{\varepsilon})^{\frac{2}{a}-1}\cdot\frac{1}{\varepsilon},\\
W_{rr}&=\frac{4}{b}(1-\frac{1}{b})((\frac{x_{n}}{\varepsilon})^{\frac{2}{a}}-r^{2})^{\frac{1}{b}-2}\cdot r^{2}
+\frac{2}{b}((\frac{x_{n}}{\varepsilon})^{\frac{2}{a}}-r^{2})^{\frac{1}{b}-1},\\
W_{nn} &=\frac{4(b-1)}{a^{2}b^{2}}((\frac{x_{n}}{\varepsilon})^{\frac{2}{a}}-r^{2})^{\frac{1}{b}-2}\cdot
(\frac{x_{n}}{\varepsilon})^{\frac{4}{a}-2}\cdot(\frac{1}{\varepsilon})^{2} \\
& \ \ \ +\frac{2(a-2)}{a^{2}b}((\frac{x_{n}}{\varepsilon})^{\frac{2}{a}}-r^{2})^{\frac{1}{b}-1}\cdot
(\frac{x_{n}}{\varepsilon})^{\frac{2}{a}-2}\cdot(\frac{1}{\varepsilon})^{2},\\
W_{rn}&=\frac{4(1-b)}{ab^{2}}((\frac{x_{n}}{\varepsilon})^{\frac{2}{a}}-r^{2})^{\frac{1}{b}-2}\cdot
(\frac{x_{n}}{\varepsilon})^{\frac{2}{a}-1}\cdot r\cdot \frac{1}{\varepsilon}.
\end{split}
\end{equation*}
Using the expression of $W$  again we have
\beq \label{3.5} W_{r}=\frac{2}{b}|W|^{1-b}\cdot r,\eeq
\begin{equation*}
\begin{split}
W_{n}&=-\frac{2}{ab}|W|^{1-b}\cdot(\frac{x_{n}}{\varepsilon})^{\frac{2}{a}-1}\cdot\frac{1}{\varepsilon},\\
W_{rr}&=\frac{4(b-1)}{b^{2}}|W|^{1-2b}\cdot r^{2}+\frac{2}{b}|W|^{1-b},\\
W_{nn}&=\frac{4(b-1)}{a^{2}b^{2}}|W|^{1-2b}\cdot(\frac{x_{n}}{\varepsilon})^{\frac{4}{a}-2}\cdot\frac{1}{\varepsilon^{2}}
+\frac{2(a-2)}{a^{2}b}|W|^{1-b}\cdot(\frac{x_{n}}{\varepsilon})^{\frac{2}{a}-2}\cdot\frac{1}{\varepsilon^{2}},\\
W_{rn}&=\frac{4(1-b)}{ab^{2}}|W|^{1-2b}(\frac{x_{n}}{\varepsilon})^{\frac{2}{a}-1}\cdot r\cdot\frac{1}{\varepsilon}.
\end{split}
\end{equation*}
Hence,
\beq \label{3.6}
\begin{split}
W_{rr}\cdot W_{nn}-(W_{rn})^{2}&=\frac{8(a-2)(b-1)}{a^{2}b^{3}}|W|^{2-3b}\cdot(\frac{x_{n}}{\varepsilon})^{\frac{2}{a}-2}\cdot r^{2}\cdot(\frac{1}{\varepsilon})^{2}\\
&+\frac{8(b-1)}{a^{2}b^{3}}|W|^{2-3b}\cdot(\frac{x_{n}}{\varepsilon})^{\frac{4}{a}-2}\cdot(\frac{1}{\varepsilon})^{2}\\
&+\frac{4(a-2)}{a^{2}b^{2}}|W|^{2-2b}\cdot(\frac{x_{n}}{\varepsilon})^{\frac{2}{a}-2}\cdot(\frac{1}{\varepsilon})^{2}\\
& :=I_{1}+I_{2}+I_{3}.
\end{split}
\eeq

To estimate $I_{1}$, $I_{2}$ and $I_{3}$, \ we will choose a small $\delta=C(a, \alpha, \beta, n)>0$. Now for this $\delta$, we choose
a small  $ \varepsilon =C(\delta, a, \eta)>0$ such that
\beq \label {3.7}
\varepsilon (\frac{1}{\delta})^{\frac{a}{2}}\leq\eta .
\eeq
Then we have
\beq\label{3.8}
\Om\subseteq\{x\in R^{n}|x_{n}\geq\eta|x'|^{a}\}
\subseteq \{ x\in R^{n}| \delta(\frac{x_{n}}{\varepsilon})^{\frac{2}{a}}\geq r^{2}\}.
\eeq
 By (3.8) we have
  \beq \label{3.9}
  |W|^{b}=(\frac{x_{n}}{\varepsilon})^{\frac{2}{a}}-r^{2}\in[(1-\delta)(\frac{x_{n}}{\varepsilon})^{\frac{2}{a}}, \ (\frac{x_{n}}{\varepsilon})^{\frac{2}{a}}]\eeq

Since $a>2$, we have two case:  $a\geq \frac{2\alpha+2}{\beta-n+1}$ and $a< \frac{2\alpha+2}{\beta-n+1}$ if $\frac{2\alpha+2}{\beta-n+1}>2$.\\

{\bf Step 2.}  Assume that $\frac{2\alpha+2}{\beta-n+1}>2$ and $2<a< \frac{2\alpha+2}{\beta-n+1}$.  We want to  find  $b>1$ and $ \varepsilon>0$  such that
(3.4) is satisfied,  by which we will prove (3.2).

Since $a>2$ and $b>1$,  $I_{1}, I_{2}$ and $ I_{3}$ in (3.6) are all positive.
 $$W_{rr}\cdot W_{nn}-(W_{rn})^{2}\geq I_{2}=\frac{8(b-1)}{a^{2}b^{3}}|W|^{2-3b}\cdot(\frac{x_{n}}{\varepsilon})^{\frac{4}{a}-2}\cdot(\frac{1}{\varepsilon})^{2}.$$
 Observe that $d_x\leq x_n$ in $\Om$. Hence, by (1.3), (3.4) and (3.5) we obtain
 \begin{equation*}
 \begin{split}
 H[W]&=(\frac{W_{r}}{r})^{n-2}(W_{rr}W_{nn}-|W_{rn}|^{2}) [F(x,W)]^{-1}\\
&\geq (\frac{2}{b})^{n-2}\cdot|W|^{(1-b)(n-2)}\cdot\frac{8(b-1)}{a^{2}b^{3}}|W|^{2-3b}\cdot
(\frac{x_{n}}{\varepsilon})^{\frac{4}{a}-2}\cdot(\frac{1}{\varepsilon})^{2}\cdot
 \frac{1}{A}d_{x}^{n+1-\beta}|W|^{\alpha}\\
 &\geq(\frac{2}{b})^{n-2}\cdot|W|^{(1-b)(n-2)}\cdot\frac{8(b-1)}{a^{2}b^{3}}|W|^{2-3b}\cdot
(\frac{x_{n}}{\varepsilon})^{\frac{4}{a}-2}\cdot(\frac{1}{\varepsilon})^{2}\cdot\frac{1}{A}x_{n}^{n+1-\beta}|W|^{\alpha}.
\end{split}
\end{equation*}
it follows from (3.9) that

\begin{equation*}
 \begin{split}
&x_{n}\leq \varepsilon (\frac{1}{1-\delta})^{\frac{a}{2}}|W|^{\frac{ab}{2}} \ \ \  \text{in}\  \Om,\\
&(\frac{x_{n}}{\varepsilon})^{\frac{4}{a}-2}\geq [\ (\frac{1}{1-\delta})^{\frac{a}{2}}|W|^{\frac{ab}{2}}\ ]^{\frac{4}{a}-2},\\
&x_{n}^{n+1-\beta}\geq [\ \varepsilon (\frac{1}{1-\delta})^{\frac{a}{2}}|W|^{\frac{ab}{2}}\ ]^{n+1-\beta}
\end{split}
\end{equation*}

Therefore, we arrive at
\begin{equation*}
 \begin{split}
H[W]\geq&(\frac{2}{b})^{n-2}\cdot|W|^{(1-b)(n-2)}\cdot\frac{8(b-1)}{a^{2}b^{3}}|W|^{2-3b}\cdot
(\frac{1}{1-\delta})^{2-a}|W|^{\frac{ab}{2}(\frac{4}{a}-2)}\\
& \cdot(\frac{1}{\varepsilon})^{2}\cdot\frac{1}{A}\varepsilon^{n+1-\beta} (\frac{1}{1-\delta})^{\frac{a}{2}(n+1-\beta)}|W|^{\frac{ab}{2}(n+1-\beta)}|W|^{\alpha}\\
=&(\frac{1}{\varepsilon})^{\beta-n+1}\frac{1}{A}(\frac{2}{b})^{n-2}\cdot\frac{8(b-1)}{a^{2}b^{3}}\cdot
(\frac{1}{1-\delta})^{2-a+\frac{a}{2}(n+1-\beta)}\\
& \cdot |W|^{(1-b)(n-2)+2-3b+\frac{ab}{2}(\frac{4}{a}-2)+\frac{ab}{2}(n+1-\beta)+\alpha}.\\
 \end{split}
\end{equation*}

Now, we set
$$(1-b)(n-2)+2-3b+\frac{ab}{2}(\frac{4}{a}-2)+\frac{ab}{2}(n+1-\beta)+\alpha=0$$
which is equivalent to
  $$b=\frac{2(n+\alpha)}{a(\beta-n+1)+2n-2}.$$  Since  $a\in(2,\frac{2\alpha+2}{\beta-n+1}),$ we see that $b>1$ by (3.1).
  Observing that $\beta-n+1> 0$, we can
   choose $\varepsilon=C(a, \eta, A, \alpha, \beta, n)>0$ small enough again,
   such that $H[W]\geq1$. This  proves (3.4),  which is to say that
  $W$ is a sub-solution to problem (1.1). By
  comparison principle, we have
  $$|u(x)|\leq |W(x)|, \ \ \forall x\in \Om .$$
   Restricting this inequality onto the $x_{n}$ axis,
  we obtain $$|u(y)|\leq(\frac{y_n}{\varepsilon})^{\frac{2}{ab}}=({\frac{d_{y}}{\varepsilon}})^{\frac{\beta-n+1}{n+\alpha}+\frac{2n-2}{a(n+\alpha)}},$$
  which  is   (3.2) exactly.

\vspace{0.5cm}
{\bf Step 3. } Assume that   $a\geq \frac{2\alpha+2}{\beta-n+1}$. Note that $a>2$  by the assumption of the theorem.
 We will  find  $b\in (0,1)$ and $\varepsilon>0$  such that the function $W$
   is a sub-solution to problem (1.1), and thus prove (3.2).

By (3.9) we have
$$I_{1}\geq\frac{8(a-2)(b-1)}{a^{2}b^{3}}|W|^{2-3b}\cdot(\frac{x_{n}}{\varepsilon})^{\frac{2}{a}-2}\cdot\delta(\frac{x_{n}}
{\varepsilon})^{\frac{2}{a}}\cdot(\frac{1}{\varepsilon})^{2} =\delta(a-2)I_{2}.$$
Since  $a>2$, $b\in (0, 1)$ and  (3.9) yields
  $$(\frac{x_{n}}{\varepsilon})^{\frac{4}{a}-2}\leq|W|^{b(2-a)},$$
 we obtain
\begin{equation*}
\begin{split}
I_{1}+I_{2}&\geq(1+\delta(a-2))I_{2}\\
&\geq (1+\delta(a-2))\frac{8(b-1)}{a^{2}b^{3}}|W|^{2-3b}\cdot|W|^{b(2-a)}\cdot(\frac{1}{\varepsilon})^{2}\\
&=(1+\delta(a-2))\frac{8(b-1)}{a^{2}b^{3}}|W|^{2-b-ab}\cdot(\frac{1}{\varepsilon})^{2}.
\end{split}
\end{equation*}
Again by (3.9), we have
$$(\frac{x_{n}}{\varepsilon})^{\frac{2}{a}-2}\geq(\frac{1}{1-\delta})^{1-a}|W|^{b(1-a)}.$$
Hence, we have
\begin{equation*}
\begin{split}
I_{3}&\geq\frac{4(a-2)}{a^{2}b^{2}}|W|^{2-2b}\cdot(\frac{1}{1-\delta})^{1-a}|W|^{b(1-a)}\cdot(\frac{1}{\varepsilon})^{2}\\
&=\frac{4(a-2)}{a^{2}b^{2}}(\frac{1}{1-\delta})^{1-a}\cdot|W|^{2-b-ab}\cdot(\frac{1}{\varepsilon})^{2}.
\end{split}
\end{equation*}
Therefore, we obtain
\begin{equation*}
\begin{split}
W_{rr}\cdot W_{nn}-(W_{rn})^{2}
&=I_{1}+I_{2}+I_{3}\\
&\geq [(1+\delta(a-2))\frac{8(b-1)}{a^{2}b^{3}}+\frac{4(a-2)}{a^{2}b^{2}}(\frac{1}{1-\delta})^{1-a}]|W|^{2-b-ab}\cdot(\frac{1}{\varepsilon})^{2}\\
&:=\sigma(a, b, \delta)|W|^{2-b-ab}\cdot(\frac{1}{\varepsilon})^{2},
\end{split}
\end{equation*}
where $$\sigma(a, b, \delta)=(1+\delta(a-2))\frac{8(b-1)}{a^{2}b^{3}}+\frac{4(a-2)}{a^{2}b^{2}}(\frac{1}{1-\delta})^{1-a}.$$
Using above estimates, together with (1.3) and (3.9)  we have
\begin{equation*}
\begin{split}
H[W]&=(\frac{W_{r}}{r})^{n-2}(W_{rr}W_{nn}-|W_{rn}|^{2})(F(x, W))^{-1}\\
 &\geq(\frac{2}{b})^{n-2}\cdot|W|^{(1-b)(n-2)}\cdot \sigma(a, b, \delta)|W|^{2-b-ab}\cdot(\frac{1}{\varepsilon})^{2}\cdot (F(x, W))^{-1}\\
&\geq(\frac{2}{b})^{n-2}\cdot|W|^{(1-b)(n-2)}\cdot \sigma(a, b, \delta)|W|^{2-b-ab}\cdot(\frac{1}{\varepsilon})^{2}\cdot \frac{1}{A}d_{x}^{n+1-\beta}|W|^{\alpha}\\
&\geq(\frac{2}{b})^{n-2}\cdot|W|^{(1-b)(n-2)}\cdot \sigma(a, b, \delta)|W|^{2-b-ab}\cdot(\frac{1}{\varepsilon})^{2}\cdot \frac{1}{A}x_{n}^{n+1-\beta}|W|^{\alpha}\\
&=(\frac{1}{\varepsilon})^{\beta-n+1}(\frac{2}{b})^{n-2}\frac{1}{A}\cdot\sigma(a, b, \delta)|W|^{2-b-ab}\cdot|W|^{(1-b)(n-2)}\cdot  (\frac{x_{n}}{\varepsilon})^{n+1-\beta}|W|^{\alpha}\\
&\geq(\frac{1}{\varepsilon})^{\beta-n+1}(\frac{2}{b})^{n-2}\frac{1}{A}\cdot\sigma(a, b, \delta)|W|^{2-b-ab}\cdot|W|^{(1-b)(n-2)}\\
&\ \ \cdot (\frac{1}{1-\delta})^{\frac{a(n+1-\beta)}{2}}|W|^{\frac{ab(n+1-\beta)}{2}}\cdot |W|^{\alpha}\\
&=(\frac{1}{\varepsilon})^{\beta-n+1}(\frac{2}{b})^{n-2}\frac{1}{A}\cdot(\frac{1}{1-\delta})^{\frac{a(n+1-\beta)}{2}}\sigma(a, b, \delta)|W|^{2-b-ab+(1-b)(n-2)+\frac{ab(n+1-\beta)}{2}+\alpha} .
\end{split}
\end{equation*}

Now, we set
\beq \label {3.10} 2-b-ab+(1-b)(n-2)+\frac{ab(n+1-\beta)}{2}+\alpha=0,\eeq
which ie equivalent to
 $$ b=\frac{2(n+\alpha)}{a(\beta-n+1)+2n-2}.$$
   Since $a \geq \frac{2\alpha+2}{\beta-n+1} $,  we see that $b\in (0, 1]$.
Of course, we  also need
\beq \label {3.11} \sigma(a, b, \delta)=(1+\delta(a-2))\frac{8(b-1)}{a^{2}b^{3}}+\frac{4(a-2)}{a^{2}b^{2}}(\frac{1}{1-\delta})^{1-a}>0,\eeq
which is equivalent to
 \beq \label{3.12}(a-2)(1-\delta)^{a-1}>(1+\delta(a-2))(\frac{2(1-b)}{b}).\eeq
 Since $\gamma_2=\frac{\beta-n+1}{n+\alpha}+\frac{2n-2}{a(n+\alpha)}\in(0,1)$ by (3.1), we  see that
 $$a-2>\frac{a(\beta-n+1)+2n-2}{n+\alpha}-2 =(\frac{2(1-b)}{b}).$$  Using this and   taking   $\delta=C(a, \alpha, \beta, n)>0$ small enough,
 we obtain (3.12) and thus (3.11).

Finally,  choosing a positive $$\varepsilon=C(a, \eta, A, \alpha, \beta,  b(a, \alpha, \beta, n), \delta(a, \alpha, \beta, n))=C(a, \eta, A, \alpha, \beta,  n)$$
smaller if necessary, by (3.10) and (3.11) we obtain that  $H[W]\geq1$ in $\Om$,  which implies $W$ is an sub-solution to problem (1.1) by (3.4). As in the end of Step 2,  we have proved (3.2).

\vskip20pt

\section {Proof of Theorem 1.3}
%sec 4
As the proof of Theorem 1.2, the proof of (i) of Theorem 1.3 follows directly from
\beq \label{4.1}
|u(y)|\leq C \ {d_{y}}^{\gamma_3}, \ \ \forall y\in \Om
\eeq
for some  positive constant$C=C(a,n, \alpha, \eta, A, diam\Om)$.

   For any $y\in \Om$, we can find $z\in \pom$, such that $|y-z|=d_{y}.$   Since the domain $\Om$ satisfies exterior sphere condition with
    radius $R$ and the problem (1.1) is invariant   under translation and rotation transforms,
 we may assume
 \beq \label{4.2} z=\mathbf{0}\in \partial\Om\bigcap \partial B_R(y_0), \ \  \Om\subseteq B_R(y_0).\eeq

Since $z=\mathbf{0}$\ satisfies $|y-z|=d_{y}$, the tangent plane of $\Om$ at $z=\mathbf{0}$ is unique. And it is easy to check $y$ is on the line dertermined by $\mathbf{0}$ and $y_{0}$.
Hence $d_{y}=|y|=|y_{0}|-|y_{0}-y|=R-|y_{0}-y|$.\\

Consider the function
\beq \label{4.3} W(x)=-M (R^2-|x-y_0|^2)^b=-M (R^2-r^2)^b,\eeq
where $r=|x-y_0|$, $M$ and $b$ are positive constants to be determined later.  As (3.3), we obtain
that
$$det D^2W=(\frac{W_r}{r})^{n-1}W_{rr}.$$
But
$$W_r=2Mbr   (R^2-r^2)^{b-1},$$
$$W_{rr}=2Mb   (R^2-r^2)^{b-2}[R^2-(2b-1)r^2].$$
Hence
\beq \label{4.4}det D^2W=(2Mb)^n  (R^2-r^2)^{n(b-1)-1}[R^2-(2b-1)r^2].\eeq
Observing that $W\leq 0$ on $\partial \Om$, we see that $W$ is a sub-solution
to problem (1.1) if and only if
\beq \label{4.5} H[W]:=(2Mb)^n  (R^2-r^2)^{n(b-1)-1}[R^2-(2b-1)r^2][F(x,W)]^{-1}\geq 1\eeq
for all $x\in \Om$ and $r=|x-y_0|$.

First, we consider the case
\beq \label{4.6}\beta<n+\alpha+1.\eeq
 As (2.3), we need only to consider the case $\beta<n+\alpha.$
 We take \beq \label{4.7} b=\frac{\beta}{n+\alpha}=\gamma_3.\eeq Then in this case
  $b=\gamma_3\in (0, 1)$
 and  $|2b-1|<1$. Hence,
 \beq \label{4.8}R^2-(2b-1)r^2\geq(1-|2b-1|)R^{2}.\eeq
It follows from (4.2) that
\beq \label{4.9}d_x\leq R -|x-y_0|=R-r, \ \ \forall x\in \Om .\eeq
Therefore, by (1.3), (4.5), (4.8) and (4.9) that
\begin{equation}\label{4.10}
\begin{split}
H[W]& \geq (1-|2b-1|)R^{2}(2Mb)^n  (R^2-r^2)^{n(b-1)-1}\frac{1}{A}(d_x)^{n+1-\beta}|W|^{\alpha}\\
&\geq(1-|2b-1|)R^{2} \frac{1}{A}(2Mb)^n  (R^2-r^2)^{n(b-1)-1}(R-r)^{n+1-\beta}|W|^{\alpha}\\
&=(1-|2b-1|)R^{2}\frac{1}{A}M^{\alpha}(2Mb)^n  (R^2-r^2)^{n(b-1)+b\alpha-1}(R-r)^{n+1-\beta}\\
&=(1-|2b-1|)R^{2}\frac{1}{A}M^{n+\alpha}(2b)^n  (R+r)^{n(b-1)+b\alpha-1}(R-r)^{n(b-1)+b\alpha+n-\beta}.
\end{split}
\end{equation}
 Note that
\beq \label{4.11} n(b-1)+b\alpha+n-\beta=0\eeq
 by (4.7).
  Hence, by (4.10) and (4.11)
we can choose a large $M=C(A, b, R, \alpha, n, \beta)$ such that
\beq \label{4.12} H[W]\geq 1\ \ in \ \ \Om .\eeq

Next, we consider the case
 $$\beta\geq n+\alpha+1.$$
 In this case, we take
 $$ b=1=\gamma_3.$$
 Then , by (1.3) and (4.4) we have
\begin{equation*}
\begin{split}
H[W]& =(2M)^n  [F(x,W)]^{-1}\\
& \geq \frac{1}{A}2^n M^{n+\alpha}  (R+r)^{ \alpha}(R-r)^{\alpha+n+1-\beta} \\
& = \frac{1}{A} 2^n M^{n+\alpha} (R+r)^{ \alpha}.
\end{split}
\end{equation*}
Therefore, (4.12) still holds true.

 To sum up, we have obtained (4.5).  By comparison principle, we see that
\beq \label{4.13} W(x)\leq u(x)\leq 0.\eeq
In particular, we obtain that
$$|u(y)|\leq |W(y)|=M(R+|y-y_0|)^{\gamma_3} (R-|y-y_0|)^{\gamma_3} \leq M(2R)^{\gamma_3}(d_y)^{\gamma_3}.$$
This is desired (4.1) and hence we have proved the (i) of Theorem 1.3.
\vskip 0.5cm

To prove (ii) of Theorem 1.3, we notice that $u\in C(\bar \Om)$ and $u<0$ in $\Om$
and $u=0$ on $\pom$. By comparing the graph of the convex function $ u$ with the  cone whose   vortex is $(x_0, u(x_0))$ and  whose upper bottom is $\bar \Om$, where $u(x_0)=\min_{\bar\Om}u$, we
see easily that (1.10) is true for $\gamma_4\geq 1$. Hence, we need only to consider that case $\gamma_4< 1$ in the following, which implies that $\beta< n+1$.

 Since  (1.10)
holds naturally for all $y\in \{x\in \Om: d_x\geq \frac{R}{2}\}$, where $R$ is the radius of the interior sphere for
the $\Om$. Hence, it is sufficient to prove
\beq \label{4.14}  |u(y)|\geq (d_y)^{\gamma_4}, \ \  \forall y\in \{x\in \Om: d_x< \frac{R}{2}\}.\eeq

Take such a $y$.  We can find $z\in \pom$, such that $|y-z|=d_{y}.$
 we may assume
 \beq \label{4.15} z=\mathbf{0}\in \partial\Om\bigcap \partial B_R(y_0), \ \   B_R(y_0)\subseteq\Om.\eeq

Since the tangent plane of $\Om$ at $z=\mathbf{0}$ is unique. And it is easy to check $y$ is on the line determined by $\mathbf{0}$ and $y_{0}$.
Hence $d_{y}=|y|=|y_{0}|-|y_{0}-y|=R-|y_{0}-y|$.\\

Observing that in this case, instead of (4.8) we have
\beq \label{4.16}d_x\geq R -|x-y_0|=R-r, \ \ \forall x\in  B_R(y_0).\eeq
First, we require $b\in(0, 1)$, which implies $2b-1\in(-1, 1)$.  Similarly to the arguments of (i),\  by (4.16)  we find that the function $W$, given by (4.3),
satisfies
\begin{equation}\label{4.17}
\begin{split}
H[W]& \leq \frac{1}{A}(2Mb)^n [R^2- r^2)]^{n(b-1)-1} [R^2-(2b-1)r^2)]d_{x}^{n+1-\beta}|W|^{\alpha}\\
&\leq \frac{1}{A}M^{\alpha}(2Mb)^n2R^{2} [R^2- r^2)]^{n(b-1)-1+b\alpha} (R-r)^{n+1-\beta}\\
&\leq \frac{1}{A}M^{\alpha+n}(2b)^n2R^{2}(2R)^{n(b-1)-1+b\alpha} (R- r)^{n(b-1)+b\alpha+n-\beta}.
\end{split}
\end{equation}
Taking  $b=\frac{\beta}{n+\alpha}=\gamma_{4}\in (0, 1)$ we have
\beq \label{4.18} n(b-1)+b\alpha+n-\beta = 0.\eeq
Using (4.17)-(4.18),  we see that $W$ is a super-solution to problem (1.1) in the domain
$B_R(y_0)$ for sufficiently  small $M=C(A, b, R, \alpha, n, \beta)>0$. Since $u$ is a solution on $\Om$ and $u|_{\partial B_R(y_0)}\leq0$,  thus $u$ is a sub-solution on $B_R(y_0)$.  Therefore, we have
\begin{equation*}
\begin{split}
|u(y)|&\geq |W(y)|\\
      &=M(R+|y-y_0|)^{\gamma_4}(R-|y-y_0|)^{\gamma_4}\\
      &\geq MR^{\gamma_3}(d_y)^{\gamma_4},
\end{split}
\end{equation*}
Which is the desired (4.14) exactly. In this way, the proof of Theorem 1.3 has been completed.
\newpage


\begin{thebibliography}{999}

\parskip2.5pt


\bibitem  {[Caf]}  Caffarelli, L.A.,
              Interior $W^{2,p}$ estimates for solutions of Monge-Amp\`ere equations,
              Ann. Math. 131  (1990), 135-150.

\bibitem {[CNS]}  Caffarelli, L.A., Nirenberg, L., Spruck, J.,
               The Dirichlet problem for nonlinear second-order elliptic equations I,
               Monge-Amp\`ere equation,  Comm. Pure Appl. Math. 37 (1984), 369--402.



\bibitem {[Ca2]} Calabi, E.,
              Complete affine hypersurfaces I,
              Symposia Mathematica  10 (1972), 19-38.

 \bibitem {[CY]} Cheng, S.Y.,  Yau, S.T.,
              On the regularity of the Monge-Amp\`ere equation
              $\det\frac{\p^2 u}{\p x_i\p x_j} =F(x,u)$,
              Comm. Pure Appl. Math. 30 (1977), 41--68.
\bibitem {[CY1]} Cheng, S.Y.,  Yau, S.T.,
              On the existence of a complete K\"ahler metric on noncompact complex
              manifolds and the regularity of Fefferman's equation,
              Comm. Pure Appl. Math. 33  (1980), 507--544.


\bibitem {[CY2]} Cheng, S.Y.,  Yau, S.T.,
              Complete affine hypersurfaces I,
              The completeness of affine metrics,
              Comm. Pure Appl. Math. 39 (1986), 839-866.

\bibitem {[CW]} Chou, K.S.,   Wang, X.-J.,
              The $L\sb p$-Minkowski problem and the Minkowski problem
              in centroaffine geometry,
              Adv. Math. 205 (2006), 33--83.

\bibitem {[F]} Figalli, A., The  Monge-Amp\`ere equation anf its Applications, European Math Soc Publ House,
                CH-8092 Zurich, Switzerland, 2017.

\bibitem {[G]} Guti$\acute{e}$rrez, C. E.,   The  Monge-Amp\`ere equation, Birkhauser, Boston, 2001.
\bibitem {[GT]} Gilbarg, D.,   Trudinger, N.S.,
              Elliptic partial differential equations of second order,
              Springer-Verlag, New York, 1983.

\bibitem {[JL]} Jian, H.Y.,  Li, Y., Optimal boundary regularity  for a  Singular Monge-Amp\`ere equation,   J. Differential Equations, 264 (2018), 6873-6890.
 

\bibitem {[JLZ]}Jian, H.Y., Lu, J.,  Zhang, G., Mirror symmetric solutions to the cetro-affine Minkowski prblem,
                Calc. Var. Partial Differential Equations, (2016), 55:41.

 \bibitem {[JLW]} Jian, H.Y., Lu, J.,  Wang, X.-J.,
              Boundary expansion of solutions to nonlinear singular elliptic equations, preprint, May 2015.


\bibitem {[JW]} Jian, H.Y.,  Wang, X.-J.,
              Bernstein theorem and regularity for a class of  Monge-Amp\`ere equation,
              J. Diff. Geom. 93 (2013),431-469.

\bibitem {[JW1]} Jian, H.Y.,  Wang, X.-J.,
             Continuity estimates for the Monge-Amp\`ere equation,
             SIAM J. Math. Anal. 39  (2007), 608--626

\bibitem {[JWZ]} Jian, H.Y.,  Wang, X.-J.,Zhao Y. W., Global smoothness for a singular Monge-Amp\`ere equation,   Journal of Differential Equations,
              263(2017), 7250-7262.

\bibitem {[LS]} Le, N. O., Savin, O., Schauder estimates for degenerate Monge-Amp\`ere equations and smoothness of the eigenfunctions,
              Invent. Math. 207 (2017), 389--423.



\bibitem {[LN]}  Loewner, C.,  Nirenberg, L.,
               Partial differential equations invariant
               under conformal or projective transformations,
               In Contributions to Analysis, pages 245-272,
               Academic Press, 1974.

\bibitem {[Lut]} Lutwak, E.,
              The Brunn-Minkowski-Firey theory I,
              Mixed volumes and the Minkowski problem,
  \bibitem {[P]}  Pogorelov, A.V.,
              The Minkowski multidimensional problem,
              J. Wiley, New York, 1978.
            J. Diff. Geom. 38 (1993), , 131-150.

 \bibitem {[Sa]} Savin, O., pointwise $C^{2,\alpha}$ estimates at the boundary for the Monge-Amp\`ere equation,
           J. Amer. Math. Soc., 26(1), 63-99 (2013)


\bibitem {[TW]} Trudinger, N.S.,  Wang, X.-J.,
              Boundary regularity for the Monge-Ampere and
              affine maximal surface equations,
              Ann. Math. (2) 167 (2008),  993--1028.

\bibitem {[TW1]} Trudinger, N.S.,  Wang, X.-J., The  Monge-Amp\`ere equation anf its geometric Applications,
             Handbook of geometric analysis. No. 1, Adv. Lect. Math. (ALM), vol. 7, Int. Press, Somerville, MA, 2008, 467-524.

\bibitem {[U1]}   Urbas, J.I.E.,
              Global H\"older estimates for equations of Monge-Amp\`ere type,
              Invent. Math. 91 (1988),  1--29.

\end{thebibliography}
\end{document}